\newtheorem{theorem}{Theorem}
\newtheorem{proposition}[theorem]{Proposition}
\newtheorem*{proposition*}{Proposition}
\newtheorem*{corollary*}{Corollary}
\newtheorem{remark}[theorem]{Remark}
\newtheorem{corollary}[theorem]{Corollary}
\newtheorem*{observation*}{Observation}
\newtheorem*{example*}{Example}
\numberwithin{equation}{section}
\begin{document}

\newcommand{\tmname}[1]{\textsc{#1}}
\newcommand{\tmop}[1]{\operatorname{#1}}
\newcommand{\tmsamp}[1]{\textsf{#1}}
\newenvironment{enumerateroman}{\begin{enumerate}[i.]}{\end{enumerate}}
\newenvironment{enumerateromancap}{\begin{enumerate}[I.]}{\end{enumerate}}

\newcounter{problemnr}
\setcounter{problemnr}{0}
\newenvironment{problem}{\medskip
  \refstepcounter{problemnr}\small{\bf\noindent Problem~\arabic{problemnr}\ }}{\normalsize}
\newenvironment{enumeratealphacap}{\begin{enumerate}[A.]}{\end{enumerate}}
\newcommand{\tmmathbf}[1]{\boldsymbol{#1}}

\def\paral{/\kern-0.55ex/}
\def\parals_#1{/\kern-0.55ex/_{\!#1}}
\def\bparals_#1{\breve{/\kern-0.55ex/_{\!#1}}}
\def\n#1{|\kern-0.24em|\kern-0.24em|#1|\kern-0.24em|\kern-0.24em|}

\newcommand{\A}{{\bf \mathcal A}}
\newcommand{\B}{{\bf \mathcal B}}
\def\C{\mathbb C}
\newcommand{\D}{{\rm I \! D}}
\newcommand{\dom}{{\mathcal D}om}
\newcommand{\pathR}{{\mathcal{\rm I\!R}}}
\newcommand{\Nabla}{{\bf \nabla}}
\newcommand{\E}{{\mathbb E}}
\newcommand{\Epsilon}{{\mathcal E}}
\def\f{\frac}
\newcommand{\F}{{\mathcal F}}
\newcommand{\G}{{\mathcal G}}
\def\g{{\mathfrak g}}
\newcommand{\HH}{{\mathbb H}}
\def\h{{\mathfrak h}}
\def\k{{\mathfrak k}}
\newcommand{\I}{{\mathcal I}}
\def\LL{{\mathbb L}}
\def\law{\mathop{\mathrm{ Law}}}
\def\m{{\mathfrak m}}
\newcommand{\K}{{\mathcal K}}
\newcommand{\p}{{\mathbb P}}
\newcommand{\R}{{\mathbb R}}
\newcommand{\Rc}{{\mathcal R}}
\def\T{{\mathcal T}}
\def\M{{\mathcal M}}
\def\N{{\mathcal N}}
\newcommand{\pnabla}{{\nabla\!\!\!\!\!\!\nabla}}
\def\X{{\mathbb X}}
\def\Y{{\mathbb Y}}
\def\L{{\mathcal L}}
\def\1{{\mathbf 1}}
\def\half{{ \frac{1}{2} }}
\def\vol{{\mathop {\rm vol}}}

\def\ad{{\mathop {\rm ad}}}
\def\Conj{{\mathop {\rm Ad}}}
\def\Ad{{\mathop {\rm Ad}}}
\newcommand{\const}{\rm {const.}}
\newcommand{\eg}{\textit{e.g. }}
\newcommand{\as}{\textit{a.s. }}
\newcommand{\ie}{\textit{i.e. }}
\def\s.t.{\mathop {\rm s.t.}}
\def\esssup{\mathop{\rm ess\; sup}}
\def\Ric{\mathop{\rm Ric}}
\def\div{\mathop{\rm div}}
\def\grad{\mathop{\rm grad}}
\def\ker{\mathop{\rm ker}}
\def\Hess{\mathop{\rm Hess}}
\def\Image{\mathop{\rm Image}}
\def\Dom{\mathop{\rm Dom}}
\def\id{\mathop {\rm Id}}
\def\Image{\mathop{\rm Image}}
\def\Cyl{\mathop {\rm Cyl}}
\def\Conj{\mathop {\rm Conj}}
\def\Span{\mathop {\rm Span}}
\def\trace{\mathop{\rm trace}}
\def\ev{\mathop {\rm ev}}
\def\Ent{\mathop {\rm Ent}}
\def\tr{\mathop {\rm tr}}
\def\graph{\mathop {\rm graph}}
\def\loc{\mathop{\rm loc}}
\def\so{{\mathfrak {so}}}
\def\su{{\mathfrak {su}}}
\def\u{{\mathfrak {u}}}
\def\o{{\mathfrak {o}}}
\def\pp{{\mathfrak p}}
\def\gl{{\mathfrak gl}}
\def\hol{{\mathfrak hol}}
\def\z{{\mathfrak z}}
\def\t{{\mathfrak t}}
\def\<{\langle}
\def\>{\rangle}
\def\span{{\mathop{\rm span}}}
\def\diam{\mathrm {diam}}
\def\inj{\mathrm {inj}}
\def\Lip{\mathrm {Lip}}
\def\Iso{\mathrm {Iso}}
\def\Osc{\mathop{\rm Osc}}
\renewcommand{\thefootnote}{}
\def\V{\mathbb V}

\newcommand{\term}{{T}}
\newcommand{\tm}{{t}}
\newcommand{\stm}{{s}}
\newcommand{\pole}{{y_0}}

\author{Xue-Mei Li}

\title{Strict Local Martingales: Examples}
\institute{Mathematics Institute, The University of Warwick, Coventry, U.K.}
\date{}
\maketitle

\begin{abstract}{We show that a continuous local martingale is a strict local martingale if its supremum process is not in $L_\alpha$ for a positive number $\alpha$ smaller than $1$. Using this we construct a family of strict local martingales which are not Bessel processes.}
\end{abstract}\\[1em]

\small{AMS Mathematics Subject Classification : 	60G44, 60G05}.

{Key words: strict local martingales, examples, oscillations, small moments} \normalsize
\\[1em]

	%60G44  	Martingales with continuous parameter 60G46  	Martingales and classical analysis 	60G05  	Foundations of stochastic processes

%We come cross a description for the strictly localness of a continuous local martingale which allows easy identification and construction of strictly local martingales and can be considered as an addendum to  \cite{E-L-Y-99, E-L-Y-97}, a work began around the period Marc Yor used to visit 
%England with his football team. % around the time I completed my Ph.D.  thesis, 
%after which for a numbers of years I visited Marc in Paris and enjoyed inspiring conversations. 
%We come cross a description for the strictly localness of a continuous local martingale, which allows easy identification and construction of strictly local martingales, this 
\section{Introduction} Gauged by the small moments of the supremum of a local martingale, we
determine whether a stochastic integral is a strict local martingale. We construct a family of
strict local martingales, using neither Bessel processes nor the speed measures of one dimensional diffusions. This can be considered as an addendum to  \cite{E-L-Y-99, E-L-Y-97}, a work began around the period Marc Yor used to visit Coventry with his football team and  I just completed my thesis and  was obsessed with martingales. 
I have many fond memories of visiting Marc at Jussieu and   in St Cheron, discussing mathematics,  life, and the universe.

The  initial motivation for studying local martingales was to check the effectiveness of the criterion given in \cite{flow} for strong $p$-completeness of a stochastic differential equation (SDE) on a $d$-dimensional space (assuming suitable regularity, e.g. $C^2$ driving vector fields). Roughly speaking, an SDE is strongly $p$-complete if its solution flow moves a  $p$-dimensional sub-manifold into another one, without breaking it. Strong $p$-completeness is weaker than strong completeness by the latter we mean that the solution is continuous with respect to the initial data and time for all time. If the SDE and its adjoint SDE (the one with the drift given the negative sign) is also strongly complete then the solutions flow are diffeomorphisms for almost all $\omega$.  Strong (d-1) completeness is equivalent to strong completeness and strong 1-completeness allows differentiating the solution flow and their corresponding semi-group with respect to the initial data.  The criterion for the strong $p$-completeness, provided that the solution to the SDE from a particular initial value does not explode, 
 is \cite[Thm. 4.1]{flow}: \small$$\sup_{x \in K}\E\left( \sup_{s\le t} |D\phi_s|^{p+\delta}\chi_{t<\zeta(x)}\right)<\infty,$$ \normalsize
where $\phi_s(x)$ is the solution, $D\phi_s$ its derivative with respect to the initial data (solution to the linearised SDE), $\zeta(x)$ its life time, $K$ any compact subset and $\delta>0$. 

 For an explicit  SDE in \cite{flow},  which  is strongly $(d-2)$ complete but not strongly $(d-1)$ complete (the only example known to the author),
this finally comes down to for which values of $\alpha$, $\E\left( \sup_{s\le t} |x+B_s|^\alpha\right)$ is finite where $(B_s)$ is a $d$-dimensional Brownian motion.
It turns out that for a local martingale which is not martingale (nicknamed strict(ly) local martingales) there is a dichotomy for a function $F\in W^{1,1}$: $\E F(\sup_{s\le t} M_s)<\infty$ if and only if $\int_\epsilon^\infty \frac {F'(y)}{y}dy<\infty$ for some $\epsilon\ge 0$, see \cite[Prop. 2.3, also pp 332]{E-L-Y-99}, the article took a long while to complete (our second study was completed first), but it does  include a generalisation of this integrability criterion  to a class of semi-martingales, \cite[Prop. 3.5]{E-L-Y-99}, proved by a perturbative method.

 The popularity of strict local martingales is largely due to Freddy Delbaen and Walter Schachermayer's paper \cite{Delbaen-Schachermayer-95} on incomplete markets, and due to Alexander Cox and David Hobson \cite{Cox-Hobson}, Robert Jarrow, Philip Protter and K. Shimbo \cite{Jarrow-Protter-Simbo}  for the use of strict local martingales in bubble modelling. See also M. Loewenstain and G. Willard  \cite{Loewenstein-Willard} and Philip Protter's excellent survey \cite{Philip-13}.    The incomplete market problem concerns equivalent martingale measures and free lunch may follow from
 a strict local martingale measure.  A classification in terms of the bubble time of a bubble asset  is given by Robert Jarrow, Philip Protter and K. Shimbo \cite{Jarrow-Protter-Simbo}. A type three financial bubble models a non-zero asset price with bounded life time 
 bursting at on or before the bubble time and is a strict local martingale. The two are linked as following: birth of bubbles are impossible in complete market,
and possible in incomplete markets, see   Philip Protter  \cite[section3]{Protter}.

 Given a function $m(t)$ of finite variation there exists a strict local martingale $(M_t)$ with $\E M_t=m(t)$ which is a time changed 3-dimensional Bessel processes with time change $r^{-1}(m(t))$ where $r(t)$ is the mean process of the Bessel process,  see \cite[Corollary 3.9]{E-L-Y-99} where constructions are also given using the speed measure. 
 However if the negative part or the positive part of $(M_t)$ behaves well, then $\E(M_t)$ cannot vanish for all $t$, see \cite[Lemma 2.1]{E-L-Y-99}, the case of a positive local martingale is classic. A positive strict local martingale has even nicer properties, Soumik Pal and Philip Protter \cite{Pal-Philip}, following \cite{Delbaen-Schachermayer-95}, showed that they can be obtained as the reciprocal of a martingale under an $h$-transform.
The fact that $(M_t)$ where $M_0=0$  is strictly local if, and only if, $\E M_t$ vanishes has implications
in probabilistic representations of solutions of PDE's and useful for the identification of the domain of the generator of a Markov process.

 Beautiful and complete answers are given to the question whether the solution to a  one dimensional SDE without drift is a local martingale, which we discuss below.
Let $(M_t)$ be a solution to the equation $M_t=x+\int_0^t a(M_s)dW_s$ (where $a^2>0$, $a^{-2}\in L_{loc}^1$ and $(W_t)$ a one dimensional Brownian motion), then 
 it is a strict local martingale if and only if $\int_\epsilon^\infty \f {x}{a^2(x)} dx=\infty$ for some $\epsilon>0$.
 This clean solution is given in the articles by  Freddy Delbaen and H. Sgirakawa \cite[Thm1.6]{Delbaen-Shirakawa}, and Shinichi Kotani \cite[Thm 1]{Kotani},  see also Hardy Hulley and Eckhard Platen \cite[Thm1.2]{Hulley-Platen} for a description by the first passage times of $(M_t)$.
%  $P(\tau=\infty)>0$,  $P(\tau=\infty)=0$, and $\tau$ is bounded in which case $(M_t)$ is a strict local martingale.
Also, in \cite{Mijatovic-Urusov}, Alexsander Mijatovi\'c and Mikhail Urusov  solved the problem whether the exponential
martingale of a stochastic integral is a strict local martingale. These approaches explore the fact that the non-explosion problem for one  dimensional elliptic SDE is determined by the Feller test and  is equivalent to that the exponential local martingale in the Girsanov transform removing the drift is a martingale. 

Despite of the success in classifying one dimensional diffusions which are also local martingales, there is wanting in concrete examples of  strict local martingales, especially in  variety. The strict local martingales we will construct later are based on an entirely different approach. 

{\it Acknowledgement. } I met Freddy in Anterwerp in February 1994, who was in conversation with Marc and came to talk to me in  `The Physics and Stochastic Analysis Conference' organised by Jan van Casteren.  I  would like to take this opportunity to thank all of them. The example itself was constructed quite many years ago. I had the opportunity to tell Marc about it, who was enthusiastic and thought it should be published. It did take me many years to finally write it up. This paper  is dedicated to the friendship and fond memory of Marc, who always loved the beauty of simplicity.
\section{Examples}   
 The observation below,  presumably known although I know no reference for it, is that  the sample paths of a strictly local martingale oscillate faster than that of a martingale, 
the wild oscillation might explain why strict local martingales are useful for bubble modelling. Recall that a stochastic process $(X_t)$ is of class $DL$ if
 $\{X_S\}$ where $S$ ranges through bounded stopping times is uniformly integrable and that  a local martingale $(M_s)$ satisfying that $\sup_{s\le t} |M_s|$ is integrable is a martingale. 
 Furthermore, a local martingale with $M_0$ integrable and with its negative part  of class $DL$ is a super-martingale, and it is  a martingale if and only if its mean value is constant in time \cite[Prop. 2.2]{E-L-Y-99}.
 Throughout the paper the underlying probability space 
$(\Omega, \F, \F_t, P)$ satisfies the standard assumptions: right continuity and completeness.
%If $I$ is an interval we denote by $M^*$ the supremum $\sup_{t\in I} |M_t|$.
 %c.f. \cite[Cor. 3.4]{E-L-Y-99}. \\[1em]
%This is really an observation, made into a proposition, for lack a better format. It states that the magnitude of the oscillations o
\begin{proposition}\label{prop1}
Let $(M_t)$ be a right continuous local martingale, $T$ a finite stopping time  and $M^T$ the stopped process.
\begin{enumerate}
\item Suppose that  $\E \left(\sup_{s\le t} |M_s|^\alpha\right)=\infty$ for some number $\alpha\in (0,1)$.  Then 
$(M_s, s\le t)$ is a  strict local martingale.
\item Suppose that  $\E \left(\sup_{s\le T} |M_s|^\alpha\right)=\infty$ for some number $\alpha\in (0,1)$ and $T\le t_0$. Then $(M^T_s, s\le t_0)$ is a strict local martingale (also
 $\limsup_{\lambda\to \infty} \lambda \p\left(\sup_{s\le T}|M_s|\ge \lambda\right)=\infty$). If $\lim_{t\to \infty}M_t$ exists we may take $T=\infty$.
\end{enumerate}  
\end{proposition}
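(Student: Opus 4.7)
The plan is to prove each statement by contrapositive, basing everything on Doob's weak-type maximal inequality. For any right-continuous martingale $N$ on $[0,t]$ with $\E|N_t|<\infty$, the submartingale property of $|N|$ yields
\[
\lambda\,\p\!\left(\sup_{s\leq t}|N_s|\geq \lambda\right)\leq \E|N_t|,\qquad \lambda>0.
\]
Writing $X=\sup_{s\leq t}|N_s|$ and using the layer-cake identity $\E X^\alpha=\int_0^\infty \alpha\lambda^{\alpha-1}\p(X\geq \lambda)\,d\lambda$, I would split the integral at an arbitrary level $L>0$: the piece over $[0,L]$ is bounded by $L^\alpha$ and the piece over $[L,\infty)$ by $\tfrac{\alpha}{1-\alpha}L^{\alpha-1}\E|N_t|$. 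Since $\alpha<1$ this sum is finite, and optimising in $L$ gives the clean bound $\E X^\alpha\leq (1-\alpha)^{-1}(\E|N_t|)^\alpha$.

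Part (1) is then immediate: were $(M_s,s\leq t)$ a true martingale, $\E|M_t|$ would be finite and the bound above would contradict the hypothesis $\E X^\alpha=\infty$. Part (2) follows by applying the same estimate to $N=M^T$, using that $T\leq t_0$ gives $\sup_{s\leq t_0}|M^T_s|=\sup_{s\leq T}|M_s|$. For the parenthetical $\limsup$ claim I would run the tail-integration argument in reverse: if $\limsup_{\lambda\to \infty}\lambda\,\p(\sup_{s\leq T}|M_s|\geq \lambda)$ were a finite constant $C$, then $\p(X\geq \lambda)\leq (C+1)/\lambda$ for $\lambda$ large, and substituting this bound into the layer-cake computation gives $\E X^\alpha<\infty$, contradicting the hypothesis.

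For $T=\infty$ under the extra assumption that $\lim_{t\to \infty}M_t$ exists a.s., I plan to apply the finite-$t$ estimate on each interval $[0,n]$ and let $n\uparrow \infty$, using monotone convergence to transfer $\sup_{s\leq n}$ to $\sup_{s<\infty}$ on the left-hand side. The main obstacle is controlling the right-hand factor $(\E|M_n|)^\alpha$: an a.s.\ convergent martingale need not be $L^1$-bounded, so I cannot naively send $n\to \infty$. I expect to close the argument by exploiting the existence of $M_\infty$ to close the martingale at infinity via Fatou --- making Doob's inequality available on $[0,\infty]$ with $\E|M_\infty|$ on the right --- and if $M_\infty$ fails to be integrable, to argue that the unboundedness of $\E|M_n|$ already forces $\sup_{s<\infty}|M_s|$ to be infinite on a set of positive measure, which is the scenario under consideration anyway. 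This closure/limiting step is the technical heart of the $T=\infty$ extension.
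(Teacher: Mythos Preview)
Your argument is correct and matches the paper's: both use Doob's weak-type maximal inequality together with the layer-cake representation of $\E X^\alpha$, argued by contrapositive, with only a cosmetic difference in ordering (the paper first extracts the tail divergence $\limsup_{\lambda\to\infty}\lambda\,\p(X\ge\lambda)=\infty$ and then contradicts it via Doob, whereas you pass directly to the moment bound $\E X^\alpha\le (1-\alpha)^{-1}(\E|N_t|)^\alpha$). The paper does not spell out the $T=\infty$ extension beyond the statement, so the subtlety you flag there is real but not a point of departure from the paper's own treatment.
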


\begin{proof}
Let $T$ be a stopping time. Suppose that $\E \left(\sup_{s\le T} |M_s|^\alpha\right)=\infty$ for some $\alpha \in (0,1)$, then
\begin{equation}
\label{1} \limsup_{\lambda\to \infty} \lambda \p\left(\sup_{s\le T}|M_s|\ge \lambda\right)=\infty.
\tag{*}
\end{equation}
 For otherwise, there exists $K$ such that 
 $ \p\left(\sup_{s\le T }|M_s|\ge \lambda\right)\le \f K \lambda$ and 
 $$\E  \left(\sup_{s\le T} |M_s|^\alpha\right)
 =\int_0^\infty \p\left(\sup_{s\le T}|M_s|^\alpha\ge \lambda\right) d\lambda
  \le 1+\int_1^\infty \f K {\lambda^{\f 1 \alpha}} d\lambda<\infty.
 $$ 
 If $(M_s, s\le t)$ were a martingale, we apply the maximal inequality to $M$ and obtain, for $\lambda>0$, \small
$$\E |M_t|\ge \lambda \p\left(\sup_{s\le t} |M_s|\ge \lambda\right),$$
\normalsize
contradicting with (*), completing the proof for part (1).
If $(M^T_t, t\le t_0)$ were a martingale, we apply the maximal inequality to $M^T$ and obtain, for $\lambda>0$, \small
$$\E |M^T_{t_0}|\ge \lambda \p\left(\sup_{s\le t_0}|M_s^T|\ge \lambda\right)=\lambda \p\left(\sup_{s\le T}|M_s|\ge \lambda\right),$$
\normalsize
this contradicts with (*), concluding part (2).
   \end{proof}
   From the proof we see also the following statement.
  \begin{remark}
  \label{remark1}
A right continuous local martingale $(M_s, s\le t)$ satisfying the condition  $\limsup_{\lambda\to \infty} \lambda \p\left(\sup_{s\le t}|M_s|\ge \lambda\right)=\infty$ is a strict local martingale.
  \end{remark}
In Proposition \ref{prop1} we stated a criterion for the strict localness of a local martingale in terms of small moments of the supremum process. 
Before using the proposition to construct an explicit strict local martingale, we observe  that  there is a class of local martingales for which such small moments are always finite. In this class, which we describe more carefully in the remark below, is the strict local martingales 
$(R_t)^{2-\delta }$ where $\delta>2$ and $R_t$ is a-$\delta$ dimensional Bessel process. See Corollary 2.4 in \cite{E-L-Y-99}.

\begin{remark}\label{remark}
Suppose $(M_s, s\le t)$ is a continuous positive local martingale with $M_0\in L_1$.
  Then $(M_s, s\le t)$ is $L^1$ bounded, $\E \left(\sup_{s\le t} |M_s|^\alpha\right)<\infty$ for any $\alpha \in (0,1)$,
  and  $\lim_{\lambda\to \infty} \lambda \p\left(\sup_{s\le  t}|M_s|\ge \lambda\right)=\E(M_0-M_t)$. 
So $(M_s, s\le t)$ is strictly local  if and only if the latter limit does not vanish for some $t$. 
\end{remark}
\begin{proof}
To see this,  we apply  Lemma 2.1 in \cite{E-L-Y-99} to conclude that
$(M_s, s\le t)$ is $L^1$ bounded and
%$$\lambda \p\left(\sup_{s\le t}|M_s|\ge \lambda\right)=\E M_0-\E[M_0-x]^+-
%\E M_t\chi_{{(\sup_{ s\le t}}M_s <x)}$$
$\lim_{\lambda \to \infty} \lambda \p\left(\sup_{s\le t}M_s\ge \lambda\right)
=\E M_0-\E M_t<\infty$.
Since $M_t\ge 0$,  the argument in the proof for part (1) shows that $\E \left(\sup_{s\le t} |M_s|^\alpha\right)$ is finite for any $\alpha\in (0,1)$. 
\end{proof}
%   If $(M_t)$ is a continuous local martingale with $M_0=0$,
%   $\{M^-\}$ is of class $D$, and 
%$\epsilon M_t^-$ is exponentially integrable for some positive number $\epsilon$, then
%Theorem 1' in \cite{E-L-Y-97} states that  both
% $$\ell:=\lim_{\lambda\to \infty} \lambda \p\left(\sup_{t\ge s\ge 0}M_s\ge \lambda\right), \quad \sigma:=\lim_{\lambda\to \infty} \lambda \p\left(\<M,M\>_s^{\f 12}\ge \lambda\right)$$
% exist in $\R_+$ and $\ell=\sqrt{\f \pi 2}\sigma=- \E(M_t)=\gamma_M(t)$. See also
% J. Az\'ema, R.F. Gundy, anbd Marc Yor \cite{Azema-Gundy-Yor}, and
% Leonid Galtchouk and Alexander Noviko\cite{Galtchouk-Novikov} for discrete time processes.

  The construction below for strict local martingales depends crucially on the Burkholder-Davies-Gundy inequality for small values $p<1$.  
 For a c\'adl\'ag local martingale, BDG inequalities are only known to hold with values $p$ greater or equal to one. The sample continuity assumption is 
 hence essential for the statement below, on the other hand a c\'adl\'ag local martingale with  a.s. finite total jumps is the sum of a continuous local martingale and a finite variation process, and one can construct strict local martingales with jump by adding such a jump process.   Our method does not seem to apply to stochastic integrals driven by a L\'evy process, for which please see the papers by Philip Protter \cite{Protter} and that by Constantinos Kardaras, D\"orte Kreher, and Ashkan Nikeghbali \cite{Kardaras-Doerte-Nikeghbali}.

%The integral $\int_0^t \<f(s), dB_s\>$ is a martingale if $\int_0^t|f(s)|^2ds$ belongs to  $L^2(\Omega)$. A partial converse holds as below.

\begin{corollary}\label{corollary}
Let $f$ be a progressively measurable function with values in $\R^d$. % and $(B_t)$ an $\R^d$ valued Brownian motion. 
 If $\E\left( \int_0^t |f(s)|^2ds\right)^{\alpha/2}=\infty $ for some number $\alpha\in (0,1)$ then $\int_0^t\<f(s),dB_s\>$ is a strict local martingale.
\end{corollary}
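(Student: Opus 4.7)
Let $M_t=\int_0^t\langle f(s),dB_s\rangle$. This is a continuous local martingale with quadratic variation $[M]_t=\int_0^t|f(s)|^2\,ds$. The plan is simply to combine the Burkholder--Davis--Gundy inequality with Proposition~\ref{prop1}.

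The key observation is that, precisely because $M$ has continuous sample paths, the BDG inequality is available in the full range $p\in(0,\infty)$, and in particular for the exponent $p=\alpha\in(0,1)$ that is the hypothesis of the corollary. Thus there exists a universal constant $c_\alpha>0$ such that
$$\E\left(\sup_{s\le t}|M_s|^\alpha\right)\ge c_\alpha\,\E\left([M]_t^{\alpha/2}\right)=c_\alpha\,\E\left(\int_0^t|f(s)|^2\,ds\right)^{\alpha/2}.$$
Under the assumption that the right-hand side is infinite, we conclude $\E(\sup_{s\le t}|M_s|^\alpha)=\infty$.

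With this in hand, part~(1) of Proposition~\ref{prop1} applies directly and tells us that $(M_s,s\le t)$ is a strict local martingale, which is exactly what the corollary asserts.

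The only real content beyond the two ingredients above is the remark already made in the paragraph preceding the corollary: the lower BDG inequality for $p<1$ genuinely requires continuity of paths, and sample continuity of $M$ here comes for free from the fact that $M$ is a stochastic integral against Brownian motion. There is no genuine obstacle; the proof is a one-line composition of BDG with Proposition~\ref{prop1}.
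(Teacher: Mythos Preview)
Your proof is correct and follows essentially the same route as the paper's own proof: apply the lower Burkholder--Davis--Gundy inequality for the exponent $\alpha\in(0,1)$ (valid by sample continuity of $M$) to obtain $\E\bigl(\sup_{s\le t}|M_s|^\alpha\bigr)=\infty$, then invoke part~(1) of Proposition~\ref{prop1}. The paper's argument is exactly this one-line composition, and your added remarks on continuity mirror the discussion preceding the corollary.
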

\begin{proof}
Set $M_t=\int_0^t\<f(s),dB_s\>$. By Burkholder-Davis-Gundy inequality,
\small $$\E \left(\sup_{s\le t} |M_s|^\alpha\right) \ge C_\alpha
 \E\left( (\<M,M\>_t)^{\alpha/2}\right),$$ \normalsize
 which is infinite by the assumption,  apply part (1) of Proposition \ref{prop1} to conclude.
\end{proof}

Let  $(W_t)$ be a one dimensional Brownian motion.  Then stochastic integrals of the form $\int_0^t g(s)dW_s$ is a strict
local martingale if $\E\left( \int_0^t|g(s)|^\alpha ds\right)=\infty$ for some number $\alpha<1$. This follows
 from H\"older's inequality: $\|g\|_{ L_\alpha( \Omega, L_2([0,t])}\ge \|g\|_{L_\alpha(\Omega; L_\alpha ([0,t]))}$.
It is immediate that on $[0, t_0]$ where $t_0<1/4$,
 $\int_0^t e^{|W_s|^2} dW_s$ is an $L^2$ martingale (Fernique's theorem); while $\int_0^t e^{|W_s|^3} dW_s$ is a strict local martingale on any interval.
\\

Finally we remark on the characterisation of a continuous  local martingale being a martingale. Assume $M_0\in L_1$.
We have seen that if  $$\ell(t):=\limsup_{\lambda\to \infty} \lambda \p\left(\sup_{s\le t}|M_s|\ge \lambda\right)$$ is infinite then $(M_t)$ is a strict local martingale. On the other hand if $\ell(t)=0$ and if $(M_t)$ is $L^1$ bounded, then $(M_t)$ is a uniformly integrable martingale.
  See K. Murali Rao \cite{Rao},  J. Az{\'e}ma,  R. F. Gundy and M. Yor \cite{Azema-Gundy-Yor}. From the point of view of mathematical finance the class of
 strict local martingales with $\ell(t)$ finite has been often studied.
 Also, suppose that the negative part of $(M_t)$ is of class $D$ and $M_t\in L^1$, then 
$\gamma(t) :=\limsup_{\lambda\to \infty} \lambda \p\left(\sup_{s\le t} M_s\ge \lambda\right)$ is always a finite number that could vanish in which case we have a martingale or that is a non-zero  number in which case we have a strict local martingale. Moreover $\gamma(t)=\E[M_0-M_t]$, see \cite{E-L-Y-97}, and \cite{E-L-Y-99}. If $(M_s, s\le t)$, where $t$ is a finite number, is a martingale then it is uniformly integrable and so $\ell(t)=\gamma(t)=0$. This may be confusing, after all we have been warned a martingale may not be $L^1$ bounded, $L^1$ bounded martingales may not be uniformly integrable! Such caution needs only be taken if the martingale is defined on an infinite time horizon or on an open interval $[0, t)$. (However we should pay attention to the problem whether the supremum of the martingale is integrable.) 
 Recently, Hardy Hulley and Johannas Ruf \cite{Hulley-Ruf} studied a necessary and sufficient condition for a suitable class of local martingales with jumps to be a martingale. 
We conclude this paper with the following open question.

{\bf Open Question.} What is a suitbale analogous result for local martingales with jumps?
% that is analogous to Proposition \ref{prop1}? Are  strict local martingale with jumps, similar to that in this paper?

\def\polhk#1{\setbox0=\hbox{#1}{\ooalign{\hidewidth
  \lower1.5ex\hbox{`}\hidewidth\crcr\unhbox0}}}

\end{document}